\newcommand{\N}{{\mathbb N}}
\newcommand{\Z}{{\mathbb Z}}
\newcommand{\Q}{{\mathbb Q}}
\newcommand{\T}{{\mathcal T}}
\newcommand{\F}{{\mathcal{F}}}
\newcommand{\Sel}{{\mathrm{Sel}}}
\newcommand{\dimF}{{\mathrm{dim}_{\mathbb{F}_2}}}
\newcommand{\Ftwo}{{\mathbb{F}_2}}
\newcommand{\Zt}{{\mathbb{Z}/2\mathbb{Z}}}
\newcommand{\hatphi}{{ \hat \phi }}
\newcommand{\p}{{ \mathfrak{p} }}
\newcommand{\ord}{{ \mathrm{ord} }}
\newcommand{\En}{{E_{(n)}}}
\newtheorem{thm}{\bf{Theorem}}
\newtheorem{theorem}{\bf{Theorem}}[section]
\newtheorem{proposition}[theorem]{\bf{Proposition}}
\newtheorem{lemma}[theorem]{\bf{Lemma}}
\theoremstyle{definition}
\newtheorem{remark}[theorem]{\bf{Remark}}
\DeclareSymbolFont{cyrletters}{OT2}{wncyr}{m}{n}
\DeclareMathSymbol{\Sha}{\mathalpha}{cyrletters}{"58}
\address{Department of Mathematics, University of Wisconsin-Madison, 480 Lincoln Drive, Madison, Wisconsin 53706}
\email{klagsbru@math.wisc.edu}
\begin{document}



\renewcommand{\baselinestretch}{1.5} \small\normalsize    

\title[\tiny{Elliptic Curves with a Lower Bound on 2-Selmer Ranks of Quadratic Twists}]{Elliptic Curves with a Lower Bound on 2-Selmer Ranks of Quadratic Twists}
\author{Zev Klagsbrun}

\begin{abstract}
For any number field $K$ with a complex place, we present an infinite family of elliptic curves defined over $K$ such that $\dimF \Sel_2(E^F/K) \ge \dimF E^F(K)[2] + r_2$ for every quadratic twist $E^F$ of every curve $E$ in this family, where $r_2$ is the number of complex places of $K$. This provides a counterexample to a conjecture appearing in work of Mazur and Rubin.
\end{abstract}

\maketitle

\pagenumbering{arabic}

\section{Introduction}

\subsection{Distributions of Selmer Ranks}

Let $E$ be an elliptic curve defined over a number field $K$ and let $\Sel_2(E/K)$ be its 2-Selmer group (see Section \ref{bg} for its definition). The \textbf{2-Selmer rank} of $E$, denoted $d_2(E/K)$, is defined as $$d_2(E/K) =  \dimF  \Sel_2(E/K) - \dimF E(K)[2].$$

For a given elliptic curve and positive integer $r$, we are able to ask whether $E$ has a quadratic twist with $2$-Selmer rank equal to $r$. A single restriction on which $r$ can appear as a $2$-Selmer rank within the quadratic twist family of a given  curve $E$ is previously known. Using root numbers, Dokchitser and Dokchitser identified a phenomenon called \textbf{constant $2$-Selmer parity} where $d_2(E^F/K) \equiv d_2(E/K) \pmod{2}$ for every quadratic twist $E^F$ of $E$ and showed that $E$ has constant 2-Selmer parity if and only if $K$ is totally imaginary and $E$ acquires everywhere good reduction over an abelian extension of $K$ \cite{DD2}.

In this paper, we show the existence of an additional obstruction to small $r$ appearing as 2-Selmer ranks within the quadratic twist family of $E$. We prove that there are curves having this obstruction over any number field $K$ with a complex place. Specifically:

\begin{thm}\label{badfamily}
For any number field $K$, there exist infinitely many elliptic curves $E$ defined over $K$ such that $d_2(E^F/K) \ge r_2$ for every quadratic $F/K$. Moreover, these curves do not have constant 2-Selmer parity and none of them become isomorphic over $\overline{K}$.
\end{thm}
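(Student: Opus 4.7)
The plan is a 2-isogeny descent performed uniformly across the entire quadratic-twist family. Specifically, I would construct curves $\En/K$ of the form $y^2 = x(x^2+a_n x + b_n)$, each equipped with a $K$-rational 2-isogeny $\phi\colon \En \to \Epn$ with kernel $\langle (0,0)\rangle$, chosen so that the dual isogeny $\hatphi$ has no $K$-rational kernel point. For every quadratic extension $F/K$, $\phi$ twists to $\phi^F\colon \EnF \to \EpnF$, and the classical isogeny-descent exact sequence
$$\Sel_{\phi^F}(\EnF/K) \to \Sel_2(\EnF/K) \to \Sel_{\hatphi^F}(\EpnF/K)$$
reduces the theorem to the uniform lower bound $\dimF \Sel_{\phi^F}(\EnF/K) \ge r_2 + 1$ over all quadratic $F/K$, after accounting for the small contribution coming from $K$-rational 2-torsion.

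The $r_2$ lower bound should arise from archimedean freedom in the global Selmer calculation. Realising $\Sel_{\phi^F}(\EnF/K)$ inside the $S$-Selmer group $K(S,2)$ for a finite set $S$ containing the archimedean places, the primes above $2$, and all primes of bad reduction of $\En$, the Poitou--Tate / Cassels--Tate machinery yields an equality of the shape
$$\dimF \Sel_{\phi^F}(\EnF/K) = \dimF K(S,2) - \sum_{v \in S}\bigl(\dimF (K_v^\times / K_v^{\times 2}) - \dimF L_v^F\bigr)$$
modulo standard global $H^0$ corrections, where $L_v^F \subset K_v^\times/K_v^{\times 2}$ is the local Kummer image of $\EpnF(K_v)/\phi^F \EnF(K_v)$. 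A complex place $v$ satisfies $K_v^\times/K_v^{\times 2}=0$, so it contributes nothing to the sum on the right, while it does contribute an extra free generator to $\mathcal{O}_{K,S}^\times \otimes \Ftwo$ via Dirichlet, raising $\dimF K(S,2)$ by $1$. The $r_2$ complex places of $K$ thus provide an archimedean surplus of exactly $r_2$, which is the quantity we need.

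The main obstacle is to control the finite-place contributions uniformly in $F$: twisting by $F$ rearranges each local image $L_v^F$ at primes where $F$ ramifies, so $\{\En\}$ must be chosen so that the sum of finite-place codimensions $\dimF(K_v^\times/K_v^{\times 2}) - \dimF L_v^F$ stays bounded by a fixed constant (absorbable into the $H^0$ correction plus $1$) for every quadratic $F/K$. The natural route is to require $\En$ to have bad reduction of a rigid, prescribed type (for example multiplicative reduction of a chosen Kodaira symbol) at a small, fixed set of primes and good reduction everywhere else away from $2$, so that $L_v^F$ is a predictable function of $F$ at each bad $v$. Proving that no quadratic twist can erode the archimedean surplus of $r_2$ is the technical heart of the argument.

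For the remaining assertions: infinitely many curves that are pairwise non-isomorphic over $\overline K$ come from varying $(a_n,b_n)$ so that the $j$-invariants $j(\En)$ take infinitely many values, and non-constant 2-Selmer parity is arranged by building in a bad prime for which $\En$ does not acquire good reduction over any abelian extension of $K$; the Dokchitser--Dokchitser criterion \cite{DD2} then forbids constant parity. Both conditions are generic in the parameter pair $(a_n,b_n)$, so they can be imposed simultaneously on the family.
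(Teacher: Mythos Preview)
Your overall strategy matches the paper's: a $2$-isogeny descent, with a Cassels/Poitou--Tate product formula controlling $\dimF\Sel_\phi - \dimF\Sel_{\hat\phi}$ as a sum of local terms, and the aim of making this sum at least $r_2$ uniformly over twists. But the proposal is missing the two structural ideas that actually make the argument go through.

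First, the finite-place control is far more stringent than ``bounded by a constant absorbable into the $H^0$ correction.'' In either your bookkeeping or the paper's, what is needed at every place $v\mid 2$ is $\dimF L_v^F \ge \dimF H^1(K_v,C)-1 = 1+[K_v:\Q_2]$ for \emph{every} quadratic $F$; a single twist dropping one unit below this destroys the bound, since the potential $2$-adic deficit is of size $[K:\Q]$ and dwarfs $r_2$. The paper engineers this by giving $E$ split multiplicative reduction at every $v\mid 2n$ with Kodaira symbols $I_{2k}$ for $E$ and $I_{4k}$ for $E'$; a Tate-uniformisation argument then identifies $H^1_\phi(K_v,C^F)$ with $N_{F_w/K_v}F_w^\times/(K_v^\times)^2$, which has codimension exactly $1$. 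At places of additive reduction the paper needs $\dimF L_v^F = 1$, and this is forced not by rigidity of the reduction type but by the global hypothesis $K(E[2])=K(E'[2])$ (equivalently, $E$ has a cyclic $4$-isogeny over $K(E[2])$ but not over $K$), which rules out local $4$-torsion. Your proposal does not isolate either mechanism, and the second is invisible from the shape $y^2=x(x^2+a_nx+b_n)$ alone.

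Second, even granting $\dimF\Sel_{\phi^F}\ge r_2+1$, passing to $d_2(E^F/K)\ge r_2$ rather than $r_2-1$ requires the extra fact that the image of the generator of $E^F(K)[2]$ in $\Sel_2$ does \emph{not} lie in the image of $\Sel_{\phi^F}$. The paper proves this using $E'(K)[2]\simeq\Zt$, which again is a consequence of $K(E[2])=K(E'[2])$; your ``accounting for the small contribution from $K$-rational $2$-torsion'' does not address it. (As an aside, your requirement that $\hat\phi$ have no $K$-rational kernel point is impossible: the kernel of any $2$-isogeny defined over $K$ is automatically a $K$-rational point of order $2$.)
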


This result disproves a conjecture appearing in \cite{MR} which predicted that subject only to the restriction of constant 2-Selmer parity, the set of twists of $E$ having 2-Selmer rank $r$ has positive density within the set of all twists of $E$ for every $r \ge 0$.


We prove Theorem \ref{badfamily} by presenting a family of elliptic curves defined over $\Q$ for which each curve in the family has the appropriate property when viewed over $K$. For $n \in \N$, let $E_{(n)}$ be the elliptic curve defined by the equation \begin{equation}\label{modelEn}E_{(n)}: y^2 + xy = x^3 - 128n^2x^2 - 48n^2x - 4n^2\end{equation} and define $\mathcal{F}$ as $\mathcal{F} = \{ E_{(n)} :  n \in \N,  1+256n^2 \not \in (K^\times)^2  \}.$ Each curve $E \in \mathcal{F}$ has a single point of order $2$ in $E(K)$ and a cyclic 4-isogeny defined over $K(E[2])$ but not $K$. Let $\phi:E \rightarrow E^\prime$ be the isogeny whose kernel is $C = E(K)[2]$. Our results are obtained by using local calculations combined with a Tamagawa ratio of Cassels to establish a lower bound on the rank of the Selmer group associated to $\phi$ (to be defined in Section \ref{bg}).


While curves $E \in \F$ have the property that $d_2(E^F/K) \ge r_2$ for every quadratic $F/K$, this does not hold in general for curves $E$ with $E(K)[2] \simeq \Zt$ that have a cyclic 4-isogeny defined over $K(E[2])$ but not over $K$. In particular, forthcoming work of this author can be used to show that every $r \ge 0$ appears infinitely often as a 2-Selmer rank within the quadratic twist family of $E^\prime$ for every $E \in \mathcal{F}$ \cite{K}.


\section{Selmer Groups}\label{bg}

We begin by briefly recalling the constructions of the 2-Selmer and $\phi$-Selmer groups along with some of the standard descent machinery. A more detailed explanation can by found in Section X.4 of \cite{AEC}.

If $E$ is an elliptic curve defined over a field $K$, then the Kummer map $\delta_{[2]}$ maps $E(K)/2(K)$ into $H^1(K, E[2])$. 
If $K$ is a number field, then for each place $v$ of $K$ we define a distinguished local subgroup $H^1_f(K_v, E[2]) \subset H^1(K_v, E[2])$ by \begin{equation*}\text{Image} \left (\delta_{[2]}: E(K_v)/2E(K_v) \hookrightarrow H^1(K_v, E[2]) \right ).\end{equation*} We define the \textbf{2-Selmer group} of $E$, denoted $\Sel_2(E/K)$, by $$\Sel_2(E/K) = \ker \left ( H^1(K, E[2]) \xrightarrow{\sum res_v} \bigoplus_{v\text{ of } K} H^1(K_v, E[2])/H^1_f(K_v, E[2]) \right ).$$



If $E^F$ is the quadratic twist of $E$ by $F/K$ where $F$ is given by $F = K(\sqrt{d})$, then there is an isomorphism $E \rightarrow E^F$ given by $(x, y) \mapsto (dx, d^{3/2}y)$ defined over $F$. Restricted to $E[2]$, this map gives a canonical $G_K$ isomorphism $E[2] \rightarrow E^F[2]$, allowing us to view $H_f^1(K_v, E^F[2])$ as sitting inside $H^1(K_v, E[2])$. The following lemma due to Kramer describes the connection between $H^1_f(K_v E[2])$ and $H^1_f(K_v, E^F[2])$. 

Given a place $w$ of $F$ above a place $v$ of $K$, we get a norm map $E(F_w) \rightarrow E(K_v)$, the image of which we denote by $E_\mathbf{N}(K_v)$. 

\begin{lemma}\label{normintersection}
Viewing $H_f^1(K_v, E^F[2])$ as sitting inside $H^1(K_v, E[2])$, we have $$H_f^1(K_v, E[2]) \cap H^1_f(K_v, E^F[2]) \simeq  E_\mathbf{N}(K_v)/2E(K_v)$$
\end{lemma}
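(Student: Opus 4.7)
The plan is to pin down elements of the intersection via explicit cocycle representatives of Kummer classes and translate coincidence in $H^1(K_v, E[2])$ into the statement that $P$ lies in the image of the norm from $F_w$. Write $F = K(\sqrt{d})$, let $\tau$ generate $\Gal(F_w/K_v)$, and let $\chi : G_{K_v} \to \{\pm 1\}$ be the quadratic character cutting out $F/K$. Under the canonical identification $E[2] = E^F[2]$, the twist presents $E^F(\overline{K_v})$ as $E(\overline{K_v})$ with Galois acting by $\sigma \cdot_F P = \chi(\sigma) \sigma P$; in particular $E^F(K_v) = \{ P \in E(F_w) : \tau P = -P \}$.

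First I would write the two Kummer classes in terms of halvings. For $P \in E(K_v)$ with $2Q = P$, the class $\delta_{[2]}(P) \in H^1(K_v, E[2])$ is represented by the cocycle $\sigma \mapsto \sigma Q - Q$; for $P' \in E^F(K_v)$ with $2Q' = P'$, the twisted $G_K$-action on $E^F$ makes the image of $\delta_{[2]}^F(P')$ in the common group $H^1(K_v, E[2])$ equal to the class of $\sigma \mapsto \chi(\sigma)\sigma Q' - Q'$.

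Next I would determine when these two classes agree. Equality forces some $T \in E[2]$ so that $R := Q - Q' - T$ satisfies $\sigma R - R = (\chi(\sigma) - 1)\sigma Q'$ for every $\sigma \in G_{K_v}$. Evaluating on $\sigma \in G_{F_w}$ gives $R \in E(F_w)$; evaluating on a lift of $\tau$ gives $\tau R - R = -\tau P' = P'$. Combined with the relation $2R = P - P'$, adding produces $R + \tau R = P$; that is, $P = \mathbf{N}(R) \in E_\mathbf{N}(K_v)$. Conversely, starting from any $P = \mathbf{N}(R)$ one sets $P' := \tau R - R$, which lies in $E^F(K_v)$, and reverses the calculation to see $\delta_{[2]}(P) = \delta_{[2]}^F(P')$.

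Together these steps exhibit a surjection $E_\mathbf{N}(K_v) \twoheadrightarrow H^1_f(K_v, E[2]) \cap H^1_f(K_v, E^F[2])$ sending $P \mapsto \delta_{[2]}(P)$. Since $2S = \mathbf{N}(S)$ for any $S \in E(K_v)$ we have $2E(K_v) \subseteq E_\mathbf{N}(K_v)$, and since $\ker \delta_{[2]} = 2E(K_v)$, the map factors through the asserted isomorphism out of $E_\mathbf{N}(K_v)/2E(K_v)$. The one delicate point, which I expect to be the main technical obstacle, is keeping the twisted action on $E^F$ straight so that the two Kummer images are compared as subgroups of a single common $H^1$; once the cocycles are set up correctly, the lemma falls out from a short algebraic manipulation.
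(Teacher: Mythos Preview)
Your argument is correct. The cocycle manipulation is set up properly: identifying $E^F(\overline{K_v})$ with $E(\overline{K_v})$ under the twisted action $\sigma\cdot_F P=\chi(\sigma)\sigma P$ makes the two Kummer images genuinely comparable inside one $H^1(K_v,E[2])$, and the computation $\tau R+R=P$ from $\tau R-R=P'$ and $2R=P-P'$ is exactly what is needed. One cosmetic point: your write-up implicitly assumes $v$ is non-split in $F$ (so that $\tau\neq 1$); when $v$ splits the statement is immediate since then $H^1_f(K_v,E^F[2])=H^1_f(K_v,E[2])$ and the norm is the identity, so nothing is lost.

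As for comparison with the paper: the paper does not actually prove this lemma. It simply cites Kramer (Proposition~7 of \cite{KK}) and Mazur--Rubin (Proposition~5.2 of \cite{MR2}), noting that the latter proof covers places above $2$ and $\infty$. Your direct cocycle computation is essentially Kramer's original argument, so you have reproduced what the cited references do rather than taken a different route.
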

\begin{proof}
This is Proposition 7 in \cite{KK} and Proposition 5.2 in \cite{MR2}. The proof in \cite{MR2} works even at places above $2$ and $\infty$.
\end{proof}

If $E(K)[2] \simeq \Zt$, then there is an isogeny $\phi:E\rightarrow E^\prime$ with kernel $C = E(K)[2]$ that gives rise to a $\phi$-Selmer group, $\Sel_\phi(E/K)$. There is a connecting map arising from Galois cohomology, $\delta_\phi:E^\prime(K)/\phi(E(K)) \rightarrow H^1(K, C)$,  taking the coset of $Q \in E^\prime(K)$ to the coset defined by the cocycle $c(\sigma) = \sigma(R) - R$ where  $R$ is any point on $E(\overline{K})$ with $\phi(R) = Q$. Identifying $C$ with $\mu_2$,  we can view $H^1(K, C)$ as $K^\times/(K^\times)^2$ and under this identification, $\delta_\phi(C) = \langle \Delta_E \rangle$, where $\Delta_E$ is the discriminant of (any model of) $E$. The map $\delta_\phi$ can be defined locally as well and for each place $v$ of $K$, we define a distinguished local subgroup $H^1_\phi(K_v, C)\subset  H^1(K_v, C)$ as the image of $E^\prime(K_v)/\phi(E(K_v))$ under $\delta_\phi$. We define the \textbf{$\mathbf \phi$-Selmer group of $\mathbf E$}, denoted $\Sel_\phi(E/K)$, as $$\Sel_\phi(E/K) = \ker \left ( H^1(K, C) \xrightarrow{\sum res_v} \bigoplus_{v \text{ of } K} H^1(K_v, C)/H^1_\phi(K_v, C) \right ).$$

The isogeny $\phi$ on $E$ gives gives rise to a dual isogeny $\hat \phi$ on $E^\prime$ whose kernel is $C^\prime = \phi(E[2])$. Exchanging the roles of $(E, C, \phi)$ and $(E^\prime, C^\prime, \hat \phi)$ in the above defines the $\mathbf{\hat \phi}$\textbf{-Selmer group}, $\Sel_\hatphi(E^\prime/K)$, as a subgroup of $H^1(K, C^\prime)$. The local conditions $H^1_\phi(K_v, C)$ and $H^1_\hatphi(K, C^\prime)$ are connected via the following exact sequence.

\begin{proposition}\label{localseq}
The sequence \begin{equation}\label{locseq}0 \rightarrow C^\prime/\phi \left ( E(K_v)[2]  \right ) \xrightarrow{\delta_\phi} H^1_\phi(K_v, C) \xrightarrow{i} H^1_f(K_v, E[2]) \xrightarrow{\phi} H^1_{\hat \phi}(K_v, C^\prime) \rightarrow 0\end{equation} is exact.
\end{proposition}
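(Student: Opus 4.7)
The plan is to derive the sequence from the long exact $G_{K_v}$-cohomology sequence attached to the short exact sequence of Galois modules
$$0 \to C \to E[2] \xrightarrow{\phi} C^\prime \to 0,$$
which is exact because $C^\prime = \phi(E[2])$ by definition and $E[2] \cap \ker \phi = C$. Once that long exact sequence is in hand, the bulk of the work will be checking that the connecting and functorial maps carry the distinguished local subgroups $H^1_\phi$, $H^1_f$, $H^1_{\hat\phi}$ into one another.

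First I would write down the relevant segment
$$E(K_v)[2] \xrightarrow{\phi} (C^\prime)^{G_{K_v}} \xrightarrow{\partial} H^1(K_v, C) \xrightarrow{i_*} H^1(K_v, E[2]) \xrightarrow{\phi_*} H^1(K_v, C^\prime),$$
which produces the injection $C^\prime/\phi(E(K_v)[2]) \hookrightarrow H^1(K_v, C)$ and provides the cohomological exactness at the middle terms for free. The heart of the argument is then to match $\partial$, $i_*$, and $\phi_*$ with the Kummer descent maps $\delta_\phi$, $\delta_{[2]}$, $\delta_{\hat\phi}$. For any $Q \in E^\prime(K_v)$ choose $R \in E(\overline{K_v})$ with $\phi(R) = Q$; then $2R = \hat\phi(Q) \in E(K_v)$, and the cocycle $\sigma \mapsto \sigma R - R$ which represents $\delta_\phi(Q) \in H^1(K_v, C)$ is, after inclusion $C \hookrightarrow E[2]$, the same cocycle that represents $\delta_{[2]}(\hat\phi(Q))$. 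This yields the identity $i_* \circ \delta_\phi = \delta_{[2]} \circ \hat\phi$, so $i_*$ maps $H^1_\phi(K_v, C)$ into $H^1_f(K_v, E[2])$; taking $Q \in C^\prime \subset E^\prime(K_v)$ further shows $\mathrm{im}\,\partial \subset H^1_\phi(K_v, C)$. Symmetrically, for $P \in E(K_v)$ with $2R = P$, the image $\phi_* \delta_{[2]}(P)$ is represented by $\sigma \mapsto \phi(\sigma R) - \phi(R)$, which equals $\delta_{\hat\phi}(P)$ since $\hat\phi(\phi R) = P$, so $\phi_*$ carries $H^1_f(K_v, E[2])$ into $H^1_{\hat\phi}(K_v, C^\prime)$.

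From these explicit formulas the remaining verifications are routine. Surjectivity of $\phi_*: H^1_f \to H^1_{\hat\phi}$ is immediate, because for every $Q \in E(K_v)$ the class $\delta_{\hat\phi}(Q)$ equals $\phi_*(\delta_{[2]}(Q))$. Exactness at $H^1_\phi(K_v, C)$ follows from the cohomological exactness of the long sequence together with the fact that $\mathrm{im}\,\partial$ lies in $H^1_\phi$. For exactness at $H^1_f(K_v, E[2])$, if $\delta_{[2]}(P) \in \ker \phi_*$, then $\delta_{\hat\phi}(P) = 0$, so $P = \hat\phi(Q)$ for some $Q \in E^\prime(K_v)$, whence $\delta_{[2]}(P) = i_*(\delta_\phi(Q))$ with $\delta_\phi(Q) \in H^1_\phi(K_v, C)$. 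I foresee no serious obstacle here; the main discipline required is the consistent bookkeeping of cocycle representatives across the three Kummer descent sequences attached to $\phi$, $\hat\phi$, and $[2]$, together with keeping in mind that $C^\prime$ in the statement is implicitly the Galois-fixed submodule $(C^\prime)^{G_{K_v}}$.
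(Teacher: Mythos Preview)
Your argument is correct. The paper itself does not give a detailed proof: it simply records that the sequence is well known and follows from ``the sequence of kernels and cokernels arising from the composition $\hat\phi\circ\phi=[2]_E$'', with a pointer to Silverman.

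Your route and the paper's are closely related but packaged differently. The paper invokes the kernel--cokernel exact sequence for the composable pair $E(K_v)\xrightarrow{\phi}E'(K_v)\xrightarrow{\hat\phi}E(K_v)$, which directly yields
\[
0\to C\to E(K_v)[2]\to C'\to E'(K_v)/\phi E(K_v)\to E(K_v)/2E(K_v)\to E(K_v)/\hat\phi E'(K_v)\to 0,
\]
and then identifies the three cokernels with $H^1_\phi$, $H^1_f$, $H^1_{\hat\phi}$ via the Kummer maps. You instead start from the long exact $G_{K_v}$-cohomology sequence of $0\to C\to E[2]\to C'\to 0$ and then verify, via the explicit cocycle identities $i_*\circ\delta_\phi=\delta_{[2]}\circ\hat\phi$ and $\phi_*\circ\delta_{[2]}=\delta_{\hat\phi}$, that the maps respect the distinguished local subgroups. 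The kernel--cokernel approach is a bit more economical because the distinguished subgroups are cokernels by definition, so no extra compatibility check is needed; your approach, on the other hand, makes the cohomological meaning of each map completely explicit. One small remark: your caveat that $C'$ should be read as $(C')^{G_{K_v}}$ is harmless but unnecessary, since $C'$ is a $G_K$-stable group of order $2$ and hence fixed pointwise.
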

\begin{proof}
This well-known result follows from the sequence of kernels and cokernels arising from the composition $\hatphi \circ \phi = [2]_E$. See Remark X.4.7 in \cite{AEC} for example.
\end{proof}

The following two theorems allow us to compare the $\phi$-Selmer group, the $\hat \phi$-Selmer group, and the 2-Selmer group .

\begin{theorem}\label{gss}The $\phi$-Selmer group, the $\hat \phi$-Selmer group, and the 2-Selmer group sit inside the exact sequence \begin{equation}0 \rightarrow E^\prime(K)[2]/\phi(E(K)[2]) \xrightarrow{\delta_\phi} \Sel_\phi(E/K) \rightarrow \Sel_2(E/K) \xrightarrow{\phi}\Sel_\hatphi(E^\prime/K).\end{equation}
\end{theorem}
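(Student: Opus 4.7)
The plan is to derive Theorem \ref{gss} from two inputs: the long exact sequence in Galois cohomology coming from the short exact sequence of Galois modules
$$0 \to C \to E[2] \xrightarrow{\phi} C^\prime \to 0$$
(noting that $E[2]/C \simeq \phi(E[2]) = C^\prime$), combined with the local exact sequence of Proposition \ref{localseq}, which governs the compatibility of the Selmer-style local conditions under the maps $i$ and $\phi$.

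First I would take Galois cohomology globally to obtain
$$0 \to C(K) \to E(K)[2] \to C^\prime(K) \xrightarrow{\partial} H^1(K, C) \xrightarrow{i} H^1(K, E[2]) \xrightarrow{\phi} H^1(K, C^\prime),$$
and then unwind definitions to identify the connecting map $\partial$ with the restriction of $\delta_\phi$ to $C^\prime(K) \subseteq E^\prime(K)$: for $Q = \phi(R) \in C^\prime(K)$ with $R \in E[2]$, both maps send $Q$ to the class of the cocycle $\sigma \mapsto \sigma(R) - R$. This identifies $\ker i$ with $C^\prime(K)/\phi(E(K)[2])$; in the setting of the paper, where $E$ has no $K$-rational cyclic $4$-isogeny extending $\phi$, one further verifies $C^\prime(K) = E^\prime(K)[2]$, matching the leftmost term of the theorem.

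Next, for each place $v$, Proposition \ref{localseq} supplies the parallel four-term sequence with the ambient $H^1$-groups replaced by $H^1_\phi(K_v, C)$, $H^1_f(K_v, E[2])$, and $H^1_\hatphi(K_v, C^\prime)$. From this I can read off that $i$ sends $\Sel_\phi(E/K)$ into $\Sel_2(E/K)$ and $\phi$ sends $\Sel_2(E/K)$ into $\Sel_\hatphi(E^\prime/K)$. Restricting the global exact sequence to classes satisfying the correct local condition at every place then yields the desired four-term sequence, after one final verification that each $\delta_\phi(Q)$ with $Q \in E^\prime(K)[2]$ lies in $\Sel_\phi(E/K)$; this is immediate because the local image of $\delta_\phi$ at $v$ is $H^1_\phi(K_v, C)$ by definition. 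The main (essentially bookkeeping) obstacle will be matching up the global kernel of $i$ with the exact form $E^\prime(K)[2]/\phi(E(K)[2])$ appearing in the theorem; once this identification is in place, exactness at $\Sel_\phi(E/K)$ and at $\Sel_2(E/K)$ is a formal consequence of combining the global sequence above with the local sequence of Proposition \ref{localseq}.
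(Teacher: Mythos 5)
Your proposal is correct and is essentially the argument the paper has in mind: the paper's proof is just the citation of Lemma 2 of \cite{FG} together with the remark that the result is a diagram chase based on the exactness of (\ref{locseq}), and your write-up (global long exact sequence from $0 \to C \to E[2] \xrightarrow{\phi} C^\prime \to 0$, identification of the connecting map with $\delta_\phi$, then imposing the local conditions via Proposition \ref{localseq}) is exactly that chase spelled out. Your extra care in noting that the leftmost term is really $C^\prime/\phi(E(K)[2])$ and only equals $E^\prime(K)[2]/\phi(E(K)[2])$ when there is no $K$-rational cyclic $4$-isogeny extending $\phi$ (as holds for the curves in $\mathcal{F}$, where $K(E[2]) = K(E^\prime[2]) \neq K$) is a point the paper glosses over, and it is handled correctly.
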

\begin{proof}
This is a diagram chase based on the exactness of (\ref{locseq}). See Lemma 2 in \cite{FG} for example.
\end{proof}





\begin{theorem}[Cassels]\label{prodform2}
The \textbf{Tamagawa ratio}, defined as $\T(E/E^\prime) = \frac{ \big | \Sel_\phi(E/K)  \big |}{\big |\Sel_{\hat \phi}(E^\prime/K)\big |}$, is given by a local product formula $$\mathcal{T}(E/E^\prime) = \prod_{v \text{of } K}\frac{\left | H^1_\phi(K_v, C)\right |}{2}.$$
\end{theorem}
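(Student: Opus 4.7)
The plan is to derive the formula by applying the Greenberg--Wiles (Poitou--Tate) product formula to the $G_K$-module $C$ together with its Cartier/Tate dual, rather than following Cassels's original pairing-theoretic argument. The first step is to identify the dual. The Weil pairing $E[2]\times E[2]\to\mu_2$ is nondegenerate and alternating, so $C$ is isotropic and the pairing descends to a perfect pairing $C\times C'\to\mu_2$ identifying $C'=\phi(E[2])$ with the Cartier dual of $C$. Since $C=E(K)[2]\simeq\Zt$ has trivial Galois action (and so does $\mu_2$), the quotient $C'$ inherits trivial Galois action as well. In particular $|H^0(K,C)|=|H^0(K,C')|=2$ and $|H^0(K_v,C)|=2$ for every place $v$ of $K$.

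Next I would verify the crucial orthogonality: under the local Tate pairing
\[H^1(K_v,C)\times H^1(K_v,C')\longrightarrow H^2(K_v,\mu_2)\simeq\Zt,\]
the distinguished subgroups $H^1_\phi(K_v,C)$ and $H^1_\hatphi(K_v,C')$ are mutual annihilators. The containment $H^1_\phi\subseteq(H^1_\hatphi)^\perp$ follows by chasing the exact sequence (\ref{locseq}) together with the factorization $\hatphi\circ\phi=[2]_E$: the images of $\delta_\phi$ and $\delta_\hatphi$ pair trivially because $\phi$ and $\hatphi$ are dual under the Weil pairing. Equality then follows from a dimension count via local Tate duality and the local Euler--Poincar\'e formula, which forces the two subgroups to have complementary orders in the self-dual ambient space $H^1(K_v,C)\simeq H^1(K_v,C')$. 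Granted this orthogonality at every place, the Greenberg--Wiles formula yields
\[\frac{|\Sel_\phi(E/K)|}{|\Sel_\hatphi(E'/K)|}=\frac{|H^0(K,C)|}{|H^0(K,C')|}\prod_v\frac{|H^1_\phi(K_v,C)|}{|H^0(K_v,C)|}.\]
By the computations of the previous paragraph the global prefactor is $1$ and each local denominator equals $2$, and the desired identity $\T(E/E^\prime)=\prod_v|H^1_\phi(K_v,C)|/2$ drops out after substitution. One must also check that $|H^1_\phi(K_v,C)|=2$ for almost all $v$ so that the product converges; this is standard, since at places of good reduction away from $2$ the local condition reduces to the unramified subgroup of order $2$.

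The main obstacle is the second step, namely verifying that $H^1_\phi(K_v,C)$ and $H^1_\hatphi(K_v,C')$ are exact annihilators under local Tate duality uniformly across all places. The isotropy direction is a formal consequence of the $[2]$-factorization, but the equality direction requires the orders on both sides to match, which in turn depends on being careful with the local Euler characteristic at archimedean places and places above $2$, where the usual modifications to Tate duality apply. Once this uniform orthogonality is in hand, the remainder of the argument is essentially bookkeeping.
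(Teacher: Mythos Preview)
Your approach is correct. The orthogonality of $H^1_\phi(K_v,C)$ and $H^1_{\hatphi}(K_v,C')$ under local Tate duality is indeed the crux, and your identification of $C'$ with the Cartier dual of $C$ via the $\phi$-Weil pairing, together with the observation that both modules carry trivial Galois action so that all the $H^0$ terms equal $2$, makes the Greenberg--Wiles substitution immediate. Note, however, that the paper does not actually supply a proof: it simply invokes Cassels's 1965 paper, citing Theorem~1.1 and equations (1.22) and (3.4) there, and adds only the convergence remark that $H^1_\phi(K_v,C)$ agrees with the unramified subgroup away from $2\Delta_E\infty$. Cassels's original argument predates the Greenberg--Wiles formalism and runs through his explicit pairing together with direct local computations rather than through the Poitou--Tate machine. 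Your route is more modular---the local orthogonality is isolated as a single lemma, after which the global statement falls out of a general theorem---while Cassels's is self-contained and does not presuppose the nine-term Poitou--Tate sequence. The substantive content in both is precisely the local duality statement you flag as the main obstacle, so the two arguments are reorganizations of one another; your caution about the archimedean and $2$-adic places is warranted but the required modifications are standard.
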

\begin{proof}
This is a combination of Theorem 1.1 and equations (1.22) and (3.4) in \cite{Cassels8}. This product converges since $H^1_\phi(K_v, C)$ equals the unramified local subgroup $H^1_u(K_v, C)$ for all $v \nmid 2\Delta_E\infty$. \end{proof}

\section{Local Conditions for Curves in $\mathcal{F}$ }\label{localF}

The goal of this section is to prove the following proposition.

\begin{proposition}\label{allsmall}
Let $E = \En \in \mathcal{F}$. Then $\dimF H^1_\phi(K_v, C^F) \ge H^1(K_v, C)-1$ for every place $v$ of $K$, where $C^F = E^F(K)[2]$.
\end{proposition}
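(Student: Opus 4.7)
The plan is to reduce the stated inequality to its dual form and then to control the dual descent image via a Hilbert-symbol constraint arising from a Pythagorean factorization of the Weierstrass polynomial of $\EnF$.

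Counting orders in the exact sequence of Proposition~\ref{localseq} applied to $\EnF$ gives
\[
|H^1_\phi(K_v, C^F)| \cdot |H^1_\hatphi(K_v, C^{\prime F})| \;=\; |C^\prime/\phi(\EnF(K_v)[2])| \cdot |H^1_f(K_v, \EnF[2])|,
\]
and a standard local Euler-characteristic identity equates the right-hand side with $|H^1(K_v, C)|$. Consequently the inequality is equivalent to $\dimF H^1_\hatphi(K_v, C^{\prime F}) \le 1$. This is immediate at archimedean places, so I focus on non-archimedean $v$ and identify $H^1_\hatphi(K_v, C^{\prime F})$ with the image of the injective descent map $\delta_\hatphi \colon \EnF(K_v)/\hatphi(\EpnF(K_v)) \hookrightarrow K_v^\times/(K_v^\times)^2$, which is generated modulo squares by $x$-coordinates of $K_v$-points of $\EnF$ together with the class of $m := 1 + 256 n^2$ coming from the $K$-rational $2$-torsion.

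The key algebraic input is the identity
\[
x^2 - d(2 + 512 n^2)\, x + d^2 m \;=\; (x - dm)^2 - (16 nd)^2 m \;=\; N_{K(\alpha)/K}\bigl(x - dm - 16nd\,\alpha\bigr),
\]
where $\alpha = \sqrt m$. Since $y^2 = x \cdot N_{K(\alpha)/K}(\cdot)$, every $x$-coordinate of a point in $\EnF(K_v)$ is a norm from $K_v(\alpha)/K_v$ modulo squares, i.e.\ $(x, m)_v = 1$ in Hilbert-symbol notation. The Pythagorean relation $1 + (16n)^2 = m$ also yields $(-1, m)_v = 1$ and hence $(m, m)_v = (-1, m)_v = 1$, so $\{1, m\}$ itself lies in $\ker\bigl((\cdot, m)_v\bigr)$; the descent image is thereby contained in $\ker\bigl((\cdot, m)_v\bigr) \subset K_v^\times/(K_v^\times)^2$.

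For $v \nmid 2$ with $m \notin (K_v^\times)^2$ this already suffices, as $\ker\bigl((\cdot, m)_v\bigr)$ has $\Ftwo$-dimension $\dimF H^1(K_v, C) - 1 = 1$. The main obstacle is the remaining places---those above $2$ and those at which $m$ is a square in $K_v$---where $\ker\bigl((\cdot, m)_v\bigr)$ can have $\Ftwo$-dimension as large as $1 + [K_v:\Q_2]$. There I plan to exploit the cyclic $4$-isogeny of $\En$ defined over $K(\En[2])$: the induced $2$-isogeny $\psi \colon \Epn \to E^{\prime\prime}$ over $K(\alpha)$ produces a second, independent norm relation which, together with $(x, m)_v = 1$, pins the image into $\{1, m\}$; for any residual $v \mid 2$ not handled by this maneuver, a direct local Euler-characteristic computation based on the explicit Tate reduction type of $\En$ at $v$ completes the bound.
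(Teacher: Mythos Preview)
Your reduction to the dual inequality $\dimF H^1_{\hat\phi}(K_v,C^{\prime F})\le 1$ is correct, and the Hilbert–symbol constraint $(x,m)_v=1$ coming from the norm factorization is a clean way to handle finite places $v\nmid 2$ at which $m$ is a non-square. The difficulty is that the remaining places, which you flag but do not actually treat, are the whole point of the proposition.

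At a place $v\mid 2$ the target $K_v^\times/(K_v^\times)^2$ has $\Ftwo$-dimension $2+[K_v:\Q_2]$, so to force $\dimF H^1_{\hat\phi}(K_v,C^{\prime F})\le 1$ you need $1+[K_v:\Q_2]$ independent linear constraints on the descent image. The relation $(x,m)_v=1$ supplies exactly one (and none at all when $m$ is a local square). Your proposed ``second, independent norm relation'' from the isogeny $\psi:E'\to E''$ over $K(\alpha)$ is never written down; even granting that it exists and is independent, it contributes at most one further constraint, which is still far short once $[K_v:\Q_2]>1$. There is no mechanism in your outline that produces the remaining $[K_v:\Q_2]-1$ relations. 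Your final fallback (``a direct local Euler-characteristic computation based on the explicit Tate reduction type'') is exactly what the paper does in Lemma~\ref{norminter}: one shows via the Tate uniformisation $E\simeq E_q$, $E'\simeq E_{q^2}$ that $H^1_{\hat\phi}(K_v,C')=0$ and then identifies $H^1_\phi(K_v,C^F)$ with the local norm group $N_{F_w/K_v}F_w^\times/(K_v^\times)^2$ using Kramer's Lemma~\ref{normintersection}. This is the substantive step, and invoking it as a fallback leaves your argument a plan rather than a proof.

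There is a smaller gap at finite $v\nmid 2$ with $m\in(K_v^\times)^2$. Here your Hilbert symbol is vacuous, and the assertion that the $4$-isogeny pins the image into $\{1,m\}=\{1\}$ is actually false in general: at a place of good reduction with $m$ a local square one has $\dimF H^1_{\hat\phi}(K_v,C^{\prime F})=1$, not $0$, so the image is \emph{not} contained in $\{1,m\}$. The paper handles these places instead by a torsion argument (Lemma~\ref{addplaces}): the equality $K(E[2])=K(E'[2])$ forces both $E^F(K_v)$ and $E^{\prime F}(K_v)$ to have no points of order $4$ at additive places $v\nmid 2$, whence $\dimF H^1_\phi=1$ directly. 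This step genuinely uses the hypothesis $K(E[2])=K(E'[2])$ and is not visible from your Hilbert-symbol viewpoint.
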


Let $E = \En \in \F$. The point $P = (-\frac{1}{4}, \frac{1}{8})$ on $E$ has order $2$ and $E^\prime = E/\langle P \rangle$ can be given by a model $y^2 +xy = x^3 + 64n^2x^2 +4n^2(1+256n^2)x.$ The discriminants of the model (\ref{modelEn}) for $E$ and this model for $E^\prime$ are given by $\Delta_E = 4n^2(1+256n^2)^3$ and by $\Delta_{E^{\prime}}  = 16n^4(1+256n^2)^3$ respectively.  As $1+256n^2 \not \in (K^\times)^2$,  we have $E(K)[2] = \langle P \rangle$. Since $\Delta_E$ and $\Delta_{E^\prime}$ differ by a square, we get that $K(E[2]) = K(E^\prime[2])$ and it follows that $\dimF E(K_v)[2] = \dimF E^\prime(K_v)[2]$ for every place $v$ of $K$. Proposition \ref{allsmall} will follow from some results applicable to all curves that have $K(E[2]) = K(E^\prime[2])$ and some results that are specific to curves in $\mathcal{F}$.



\begin{remark} Forthcoming work of this author shows if $E(K)[2] \simeq \Zt$, then $E$ does not have a cyclic 4-isogeny defined over $K$ but acquires one over $K(E[2])$ if and only if $K(E[2]) = K(E^\prime[2])$. See Section 4 of \cite{K} for more details. \end{remark} 

\begin{lemma}\label{addplaces}
Let $E$ be an elliptic curve with $E(K)[2] \simeq \Zt$ and suppose further that $K(E[2]) = K(E^\prime[2])$. If $E$ has additive reduction at a place $v \nmid 2$, then $\dimF H^1_\phi(K_v, C) = 1$. 
\end{lemma}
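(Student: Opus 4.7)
The plan is to split on $t := \dimF E(K_v)[2]$, which equals $\dimF E^\prime(K_v)[2]$ by the hypothesis $K(E[2]) = K(E^\prime[2])$; since $C \subseteq E(K)$, we have $t \in \{1,2\}$. The key standard input for $v \nmid 2$ and $E$ additive at $v$ is that $E_0(K_v)$ has no 2-torsion and is 2-divisible (the formal group is pro-$p$ with $p$ odd, and $\widetilde E^{\mathrm{ns}}(k_v) \cong \mathbb{G}_a(k_v)$ is uniquely 2-divisible); this forces $C \cap E_0(K_v) = 0$, and because $\hat\phi\phi = [2]$ is an isomorphism on $E_0(K_v)$, the restriction $\phi\colon E_0(K_v) \to E^\prime_0(K_v)$ is also an isomorphism.

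In Case $t=2$, I would compute $|H^1_\phi(K_v,C)| = |E^\prime(K_v)/\phi(E(K_v))|$ by applying the snake lemma to the comparison of short exact sequences $0 \to E_0(K_v) \to E(K_v) \to \Phi(k_v) \to 0$ and its $E^\prime$-analogue, where $\Phi$ denotes the component group of the N\'eron model. The vanishing of $\ker\phi_0$ and $\mathrm{coker}\,\phi_0$ together with the induced isomorphism $C \xrightarrow{\sim} \ker\phi_\Phi$ collapses the snake sequence to $|E^\prime(K_v)/\phi(E(K_v))| = 2|\Phi^\prime(k_v)|/|\Phi(k_v)|$. Since $t=2$ forces $\Phi(k_v)[2] \cong E(K_v)[2]$ to have order 4, and any geometric component group of an additive Kodaira type has order at most 4, we conclude $\Phi(k_v) = (\Zt)^2$; the same holds for $E^\prime$. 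Hence $|H^1_\phi(K_v,C)| = 2$.

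In Case $t=1$, I would feed into the exact sequence of Proposition \ref{localseq}: since $\phi(E(K_v)[2]) = \phi(C) = 0$, the leftmost term $C^\prime/\phi(E(K_v)[2])$ has dimension 1, and $\dimF H^1_f(K_v, E[2]) = \dimF E(K_v)[2] = 1$ for $v \nmid 2\infty$, so the alternating dimension count gives $\dimF H^1_\phi(K_v, C) + \dimF H^1_{\hat\phi}(K_v, C^\prime) = 2$. To rule out both $(2,0)$ and $(0,2)$, I would observe that $\phi^{-1}(Q^\prime) = \{Q, Q+P\}$ (with $P$ a generator of $C$ and $Q \in E[2]$ mapping to the generator $Q^\prime$ of $C^\prime$) avoids $E(K_v)$ when $t=1$, so $Q^\prime \in C^\prime \subseteq E^\prime(K_v)$ represents a nontrivial class in $H^1_\phi(K_v, C) \cong E^\prime(K_v)/\phi(E(K_v))$; symmetrically, $\hat\phi^{-1}(P) = \{R^\prime, R^\prime + Q^\prime\}$ avoids $E^\prime(K_v)$, giving nontriviality of $H^1_{\hat\phi}(K_v, C^\prime)$. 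Each dimension is therefore exactly 1.

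The main obstacle I anticipate is the snake-lemma bookkeeping in Case $t=2$: one must carefully verify both that $\phi$ restricts to an isomorphism on identity components and that $C$ injects into $\Phi(k_v)$, which together allow the cokernel of $\phi$ on $K_v$-points to be read off from the component groups alone. Case $t=1$ is conceptually cleaner, requiring no Tamagawa-number comparison—only the structural observation that partial local 2-torsion simultaneously blocks $Q^\prime$ from $\phi(E(K_v))$ and $P$ from $\hat\phi(E^\prime(K_v))$.
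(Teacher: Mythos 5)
Your proposal is correct, and while it rests on the same two structural inputs as the paper (unique $2$-divisibility of $E_0(K_v)$ for $v \nmid 2$ with additive reduction, and the bound $|E(K_v)/E_0(K_v)| \le 4$ coming from Tate's algorithm, which together force $E(K_v)[2^\infty]$ into the component group), you organize the count genuinely differently. The paper argues uniformly: it first rules out points of order $4$ on $E(K_v)$ and $E^\prime(K_v)$ (an order-$4$ point $R$ would give $\phi(R) \in E^\prime(K_v)[2] - C^\prime$ and hence $\dimF E^\prime(K_v)[2] = 2 > \dimF E(K_v)[2]$, contradicting the hypothesis $K(E[2]) = K(E^\prime[2])$), and then reads off $\dimF E^\prime(K_v)/\phi(E(K_v)) = 1$ from a degree-$2$ count; there is no case division and no N\'eron component-group formalism beyond the order bound. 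You instead split on $t = \dimF E(K_v)[2]$: for $t = 2$ a snake-lemma comparison of the sequences $0 \to E_0(K_v) \to E(K_v) \to \Phi(\mathbb{F}_v) \to 0$ gives $|H^1_\phi(K_v, C)| = 2|\Phi^\prime(\mathbb{F}_v)|/|\Phi(\mathbb{F}_v)| = 2$ once both component groups are pinned down as $(\Zt)^2$, and for $t = 1$ you combine the dimension count from Proposition \ref{localseq} with explicit nontriviality of the classes of $Q^\prime$ and $P$. A pleasant by-product of your route is that the $t = 1$ case uses only $v \nmid 2\infty$ and $\dimF E(K_v)[2] = \dimF E^\prime(K_v)[2]$, with no reduction hypothesis at all; the cost is heavier bookkeeping in the $t = 2$ case. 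Two details you gloss should be stated: surjectivity of $\phi \colon E_0(K_v) \to E^\prime_0(K_v)$ needs that $E^\prime$ also has additive reduction at $v$ (reduction type is an isogeny invariant), so that $[2] = \phi \circ \hatphi$ is an automorphism of $E^\prime_0(K_v)$ as well; and when you pass between the cokernel of $\phi$ on $K_v$-points and on component groups, and then invoke the order-at-most-$4$ bound for additive Kodaira types, you are using the identification $E(K_v)/E_0(K_v) \simeq \Phi(\mathbb{F}_v)$ together with the injection of $\Phi(\mathbb{F}_v)$ into the geometric component group, which deserve an explicit citation.
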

\begin{proof}
Let $E_0(K_v)$ be the group of points on $E(K_v)$ with non-singular reduction, $E_1(Kv)$ the subgroup of points with trivial reduction, and $\mathbb{F}_v$ the residue field of $K_v$. The formal group structure on $E_1(K_v)$ shows that $E_1(K_v)$ is uniquely divisible by $2$ and since $E_0(K_v)/E_1(K_v) \simeq \mathbb{F}_v^+$, $E_0(K_v)$ is uniquely 2-divisible as well. Since $E(K_v)$ has a point of order $2$, Tate's algorithm then shows that $E(K_v)/E_0(K_v)$ -- and therefore $E(K_v)[2^\infty]$ -- either injects to $\Zt \times \Zt$ or is cyclic of order 4. 

Therefore, if $E(K_v)$ has a point $R$ of order 4, then $2R \in C$. It follows that $\phi(R) \in E^\prime(K_v)[2] - C^\prime$ and $E^\prime(K_v)[2] \simeq \Zt \times \Zt$. This contradicts the fact that $\dimF E(K_v)[2] = \dimF E^\prime(K_v)[2]$ since the 2-part of $E(K_v)$ is cyclic. This shows that $E(K_v)$ can't have any points of order $4$ and similar logic shows that the same is true for $E^\prime(K_v)$. It then follows that   $\dimF E^{\prime}(K_v)/\phi( E(K_v) ) = 1$ since $\dimF E(K_v)[2] = \dimF E^\prime(K_v)[2]$ and $\phi$ has degree 2. 
\end{proof}

\begin{lemma}\label{norminter}
Let $E$ be an elliptic curve with $E(K)[2] \simeq \Zt$ and suppose that $K(E[2]) = K(E^\prime[2])$. If $E$ has split multiplicative reduction at a place $v$ where the Kodaira symbols of $E$ and $E^\prime$ are $I_{n}$  and $I_{2n}$ respectively, then $H^1_\phi(K_v, C) = H^1(K_v, C)$. \linebreak Further, if $F/K$ is a quadratic extension in which $v$ does not split, then \linebreak $\dimF H^1_\phi(K_v, C^F) = \dimF H^1(K_v, C) - 1$ and $H^1_\phi(K_v, C^F) = N_{F_w/K_v} F_w^\times / (K_v^\times)^2$, where $w$ is the place of $F$ above $v$.
\end{lemma}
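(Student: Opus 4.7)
My plan is to use the Tate uniformization at $v$. Since $E$ has split multiplicative reduction of type $I_{n}$, we have $E(K_v) \simeq K_v^\times/q^{\Z}$ with $v(q)=n$; and because $E^\prime = E/C$ has type $I_{2n}$, the subgroup $C$ must correspond to $\{\pm 1\} \subset K_v^\times/q^\Z$, so $E^\prime(K_v) \simeq K_v^\times/q^{2\Z}$ with $\phi$ identified as squaring $u \mapsto u^2$. The global hypothesis $K(E[2]) = K(E^\prime[2])$ passes to $K_v$-completions, so $K_v(E[2]) = K_v(E^\prime[2])$; since $K_v(E^\prime[2]) = K_v$ (as $\sqrt{q^2}\in K_v$) while $K_v(E[2]) = K_v(\sqrt q)$, this forces $q \in (K_v^\times)^2$, say $q = s^2$. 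The first claim follows immediately: $\delta_\phi$ identifies $H^1_\phi(K_v,C)$ with $E^\prime(K_v)/\phi(E(K_v)) = K_v^\times/((K_v^\times)^2\cdot q^{2\Z}) = K_v^\times/(K_v^\times)^2 = H^1(K_v,C)$.

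For the twisted statement, let $\sigma$ generate $\Gal(F_w/K_v)$ and let $\chi\colon G_{K_v}\to\{\pm 1\}$ be its associated quadratic character. Over $F_w$ the twist $E^F$ becomes $E$ as a Tate curve, but $G_{K_v}$ acts on $E^F(\overline{K_v}) = \overline{K_v}^\times/q^\Z$ via $\tau\cdot u = \tau(u)^{\chi(\tau)}$. Taking fixed points (using that any class fixed by $G_{F_w}$ is represented by an element of $F_w^\times$, since nontrivial powers of $q$ have nonzero valuation) yields $E^F(K_v) = \{u\in F_w^\times : N_{F_w/K_v}(u)\in q^\Z\}/q^\Z$ and $E^{\prime F}(K_v) = \{\alpha\in F_w^\times : N(\alpha)\in q^{2\Z}\}/q^{2\Z}$, with $\phi^F$ still squaring; this description is uniform in both the unramified and ramified cases. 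The crucial step is a Hilbert 90 trick: given $\alpha\in F_w^\times$ with $N(\alpha)=q^{2m}$, the element $\alpha/q^m$ has norm $1$, so there exists $\beta\in F_w^\times$ with $\sigma(\beta)/\beta = \alpha/q^m$. Setting $\sqrt{d}:=\sqrt{\alpha}\cdot\beta$ and verifying the cocycle relation case-by-case on whether $\tau \in G_{F_w}$ or not shows that the Kummer class $\delta_{\phi^F}(\bar\alpha)$ equals $d = \alpha\beta^2 = q^m N_{F_w/K_v}(\beta) \equiv N_{F_w/K_v}(\beta) \pmod{(K_v^\times)^2}$ (using $q\in(K_v^\times)^2$). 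Thus the image of $\delta_{\phi^F}$ lies in $N_{F_w/K_v}(F_w^\times)/(K_v^\times)^2$; the reverse inclusion is immediate by running the construction with $\alpha := \sigma(\gamma)/\gamma$ and $\beta := \gamma$ to realize an arbitrary prescribed norm $N_{F_w/K_v}(\gamma)$.

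The dimension equality $\dimF H^1_\phi(K_v,C^F) = \dimF H^1(K_v,C)-1$ then follows from $[K_v^\times : N_{F_w/K_v}(F_w^\times)] = 2$ (local class field theory). The main obstacle is the cocycle verification in the key step---tracking the twisted action of $G_{K_v}$ and confirming that the candidate $\sqrt d$ genuinely realizes the cocycle $c(\tau) = \tau(\sqrt\alpha)^{\chi(\tau)}/\sqrt\alpha$ representing $\delta_{\phi^F}(\bar\alpha)$, with the right identification of $C^F$ with $\mu_2$. The payoff of the hypothesis $q\in(K_v^\times)^2$ is that the Tate lattice $q^\Z$ becomes invisible at the level of Kummer theory, which is precisely what permits the clean norm-group description.
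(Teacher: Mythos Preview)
Your argument is correct. The untwisted assertion $H^1_\phi(K_v,C)=H^1(K_v,C)$ is established exactly as in the paper: identify $E$ and $E'$ with the Tate curves $E_q$ and $E_{q^2}$ (forced by the Kodaira symbols), so that $\phi$ is squaring and $E'(K_v)/\phi(E(K_v)) \simeq K_v^\times/(K_v^\times)^2$. Both you and the paper extract the same consequence of the hypothesis $K(E[2])=K(E'[2])$, namely $q\in (K_v^\times)^2$, though the paper phrases it as $E(K_v)[2]\simeq \Zt\times\Zt$.

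For the twisted statement your route is genuinely different. The paper does \emph{not} compute $\delta_{\phi^F}$ directly. Instead it invokes the exact sequence of Proposition~\ref{localseq} to show that $i:H^1(K_v,C)\to H^1_f(K_v,E[2])$ is an isomorphism (injectivity uses $E(K_v)[2]\simeq(\Zt)^2$, surjectivity uses $H^1_{\hat\phi}(K_v,C')=0$), and then that $H^1_\phi(K_v,C^F)$ maps isomorphically onto the intersection $H^1_f(K_v,E[2])\cap H^1_f(K_v,E^F[2])$. It then applies Kramer's result (Lemma~\ref{normintersection}) to identify this intersection with $E_{\mathbf N}(K_v)/2E(K_v)$, and finally observes that under the Tate uniformization the elliptic-curve norm becomes the field norm, yielding $N_{F_w/K_v}F_w^\times/(K_v^\times)^2$. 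Your approach bypasses Kramer's lemma and the exact sequence entirely: you write down $E^{\prime F}(K_v)$ explicitly as $\{\alpha\in F_w^\times:N(\alpha)\in q^{2\Z}\}/q^{2\Z}$ and compute the Kummer cocycle by hand via Hilbert~90. The cocycle verification you flag as delicate does go through (for $\tau\notin G_{F_w}$ one has $\tau(\sqrt\alpha)\sqrt\alpha=\epsilon q^m$ with $\epsilon=\pm1$, and both $c(\tau)$ and $\tau(\sqrt d)/\sqrt d$ equal $\epsilon$). Your method is more self-contained and makes the role of $q\in(K_v^\times)^2$ completely explicit, at the cost of a careful cocycle chase; the paper's method is more structural and reusable, but imports Kramer's norm-intersection result as a black box.
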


\begin{proof}
Since $E$ and $E^\prime$ have split multiplicative reduction at $v$, $E/K_v$ and $E^\prime/K_v$ are $G_{K_v}$ isomorphic to Tate curves $E_q$ and $E_{q^\prime}$ respectively. By the condition on the Kodaira symbols, $|q|_v^2 = |q^\prime|_v$. Observe that $E_q$ can be two-isogenous to three different curves: $E_{q^2}, E_{\sqrt{q}}, \text{ and } E_{-\sqrt{q}}$. The curve $E_{q^\prime}$ must therefore be one of these possibilities and the only possibility with $|q|_v^2 = |q^\prime|_v$ is $q^\prime = q^2$. We therefore get $G_{K_v}$ isomorphisms $\overline{K_v}^\times/q^\Z  \rightarrow E(\overline{K_v})$ and $\overline{K_v}^\times/q^{2\Z}  \rightarrow E^\prime(\overline{K_v})$ such that the following diagram commutes.

\begin{center}\label{diag1}\leavevmode
\begin{xy} \xymatrix{
\overline{K_v}^\times/q^\Z \ar[d] \ar[r]^{x \mapsto x^2} & \overline{K_v}^\times/q^{2\Z} \ar[d] \ar[r]^{x \mapsto x} & \overline{K_v}^\times/q^\Z \ar[d]\\
E(\overline{K_v}) \ar[r]^{\phi} & E^\prime(\overline{K_v}) \ar[r]^{\hat \phi} & E(\overline{K_v})
}\end{xy}\end{center}

Because the maps in this diagram are $G_{K_v}$ equivariant, we can restrict to $K_v$ giving the following diagram, where the vertical arrows are isomorphisms.
\begin{center}\leavevmode
\begin{xy} \xymatrix{
{K_v}^\times/q^\Z \ar[d] \ar[r]^{x \mapsto x^2} & {K_v}^\times/q^{2\Z} \ar[d] \ar[r]^{x \mapsto x} & {K_v}^\times/q^\Z \ar[d]\\
E({K_v}) \ar[r]^{\phi} & E^\prime({K_v}) \ar[r]^{\hat \phi} & E({K_v})
}\end{xy}\end{center}

We therefore get a sequence of $G_K$-isomorphisms
\begin{equation*}
\resizebox{\hsize}{!}{$H^1_\phi(K_v, C) \simeq E^\prime(K_v)/\phi(E(K_v)) \simeq  ({K_v}^\times/q^{2\Z})/ ({K_v}^\times/q^\Z)^2 \simeq K_v^\times/(K_v^\times)^2 \simeq H^1(K_v, C)$}
\end{equation*}
and that $H^1_\hatphi(K_v, C^\prime) = 0$ proving the first part of the lemma.

Further, by the exactness of (\ref{locseq}), the map $i:H^1(K_v, C) \rightarrow H^1_f(K_v, E[2])$ is surjective. Because $E^\prime(K_v) \simeq {K_v}^\times/q^{2\Z}$, we see that $E^\prime(K_v)[2] = \Zt \times \Zt$. Since $K(E[2]) = K(E^\prime[2])$, we then see that $E(K_v)[2] = \Zt \times \Zt$ as well. The exactness of (\ref{locseq}) then shows that $i$ is injective. We therefore get that the restriction $\tilde i:H^1_\phi(K_v, C^F) \rightarrow H^1_f(K_v, E[2]) \cap H^1_f(K_v, E^F[2])$ is also injective.

Let $c \in H^1_f(K_v, E[2]) \cap H^1_f(K_v, E^F[2])$. As $H^1_\phi(K_v, C) = 0$, $c$ maps trivially into $H^1_\hatphi(K_v, C^{\prime F})$ under the map $\phi$ in (\ref{locseq}). It follows from Proposition \ref{localseq} that $c$ is in the image of $H^1_\phi(K_v, C^F)$ and that $\tilde i:H^1_\phi(K_v, C^F) \rightarrow H^1_f(K_v, E[2]) \cap H^1_f(K_v, E^F[2])$ is surjective. Therefore $\tilde i$ is an isomorphism.


By Lemma \ref{normintersection}, $H^1_f(K_v, E[2]) \cap H^1_f(K_v, E^F[2]) = N_{F_w/K_v}E(F_w)/2E(K_v).$
The elliptic curve norm map $N_{F_w/K_v}:E(F_w) \rightarrow E(K_v)$ translates into the usual field norm $N_{F_w/K_v}:F_w^\times/q^\Z \rightarrow K_v^\times/q^{2\Z}$, so $H^1_f(K_v, E[2]) \cap H^1_f(K_v, E^F[2])$ can be identified with $$\left ( N_{F_w/K_v} F_w^\times / q^{2Z} \right ) \big / \left ( K_v^\times / q^\Z \right )^2  \simeq  N_{F_w/K_v} F_w^\times / (K_v^\times)^2.$$ The isomorphism $E^{\prime F}(K_v)/\phi(E^F(K_v)) \rightarrow E(K_v)/2E(K_v) \cap E^F(K_v)/2E^F(K_v)$ is given by $\hat \phi$. As $\hat \phi$ is given by $x \mapsto x$ in the above diagram, the identification of $H^1_f(K_v, E[2]) \cap H^1_f(K_v, E^F[2])$ with $N_{F_w/K_v} F_w^\times / (K_v^\times)^2$ identifies $H^1_{\phi}(K_v, C^F)$ with $N_{F_w/K_v} F_w^\times / (K_v^\times)^2$. Standard results from the theory of local fields then give that $\dimF H^1_\phi(K_v, C^F) = \dimF H^1(K_v, C) -1$.
\end{proof}

\begin{lemma}\label{multcase}
If $E = \En \in \mathcal{F}$, then $E$ has multiplicative reduction at primes $\p \mid 2n$. Further, if $k = \ord_\p 2n$, then $E$ has Kodaira symbol $I_{2k}$ at $\p$ and $E^\prime$ has Kodaira symbol $I_{4k}$ at $\p$.
\end{lemma}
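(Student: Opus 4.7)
The plan is to verify the claim directly from Tate's algorithm by showing that the displayed Weierstrass models of $E$ and $E^\prime$ are $\p$-minimal with multiplicative reduction at every prime $\p \mid 2n$, which reduces the Kodaira symbol computation to evaluating $\ord_\p$ of the discriminants. First I would observe that for any $\p \mid 2n$ the quantity $1 + 256 n^2$ is a $\p$-adic unit, since $256 n^2 = 2^8 n^2 \in \p$ (whether $\p \mid 2$ or $\p \mid n$) and hence $1 + 256 n^2 \equiv 1 \pmod{\p}$. From the formulas $\Delta_E = 4n^2(1+256n^2)^3$ and $\Delta_{E^\prime} = 16 n^4(1+256n^2)^3$ given earlier, one then immediately obtains $\ord_\p(\Delta_E) = \ord_\p(4n^2) = 2k$ and $\ord_\p(\Delta_{E^\prime}) = \ord_\p(16n^4) = 4k$, where $k = \ord_\p(2n)$.

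Next I would compute the $c_4$ invariants from the standard $b_i,c_4$ formulas applied to the two displayed models. A short calculation yields $c_4(E) = (1+256n^2)(1+1024n^2)$ and $c_4(E^\prime) = (1+256n^2)(1+64n^2)$; in both expressions each factor is a $\p$-adic unit at every $\p \mid 2n$ by the same unit argument as above, so $\ord_\p(c_4(E)) = \ord_\p(c_4(E^\prime)) = 0$.

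The two conditions $\ord_\p(c_4)=0$ and $\ord_\p(\Delta)>0$ together suffice to conclude, via Tate's algorithm and uniformly in residue characteristic, that the given model is $\p$-minimal with multiplicative reduction and Kodaira symbol $I_{\ord_\p(\Delta)}$. Indeed, any Weierstrass change of variable scales $c_4$ by $u^4$ and $\Delta$ by $u^{12}$, so $\ord_\p(c_4)=0$ forces any minimizing transformation to satisfy $\ord_\p(u)=0$, which gives $\p$-minimality; and for a minimal model, $\ord_\p(c_4)=0$ together with $\ord_\p(\Delta)>0$ is precisely the criterion distinguishing multiplicative from additive reduction. Substituting the valuations $2k$ and $4k$ computed above then yields the claimed Kodaira symbols $I_{2k}$ for $E$ and $I_{4k}$ for $E^\prime$. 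The only potentially delicate point is the treatment of primes above $2$, where Tate's algorithm is classically more subtle; however, since the $c_4$-criterion is uniform across residue characteristics, no separate characteristic-$2$ analysis is required here.
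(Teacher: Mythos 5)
Your proof is correct and takes essentially the same route as the paper, which also verifies minimality and the presence of a node and then reads the Kodaira symbols off the denominators of the $j$-invariants (equivalent to your $\ord_\p(\Delta)$ computation once $\ord_\p(c_4)=0$ is known). You merely make explicit the factorizations $c_4(E)=(1+256n^2)(1+1024n^2)$ and $c_4(E^\prime)=(1+256n^2)(1+64n^2)$ and the standard $c_4$-unit criterion, both of which check out.
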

\begin{proof}
If $\p \mid 2n$, then the model (\ref{modelEn}) is minimal at $\p$. The reduction of (\ref{modelEn}) mod $\p$ has a node so $E$ has multiplicative reduction at $\p$. We can then read the Kodaira symbols for $E$ and $E^\prime$ at $\p$ off of the denominators of their j-invariants which are $j(E) = \frac{(1+1024n^2)^3}{4n^2}$ and $j(E^\prime) = \frac{(1+64n^2)^3}{16n^4}$ respectively.
\end{proof}



\begin{proof}[Proof of Proposition \ref{allsmall}]
Lemma \ref{multcase} combined with Lemma \ref{norminter} show that the proposition is true for all places $v\mid 2n$. The j-invariant of $E$ shows that these are the only places where $E^F$ can have multiplicative reduction and the result then follows from  Proposition \ref{addplaces}.
\end{proof}

\section{Proof of Main Theorem}\label{pfofmain}

We begin by relating $d_2(E/K)$ to the 2-adic valuation of $\mathcal{T}(E/E^\prime)$.

\begin{proposition}\label{ord2T}
If $E(K)[2] \simeq \Zt$ and $K(E[2]) = K(E^\prime[2])$, then $$d_2(E/K) \ge \ord_2 \mathcal{T}(E/E^\prime).$$
\end{proposition}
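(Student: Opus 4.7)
The plan is to establish two separate inequalities: (i) $\dimF \Sel_2(E/K) \ge \dimF \Sel_\phi(E/K)$, and (ii) $\dimF \Sel_{\hat\phi}(E^\prime/K) \ge 1$. Together these will yield
\begin{equation*}
d_2(E/K) = \dimF \Sel_2(E/K) - 1 \ge \dimF \Sel_\phi(E/K) - 1 \ge \dimF \Sel_\phi(E/K) - \dimF \Sel_{\hat\phi}(E^\prime/K) = \ord_2 \T(E/E^\prime).
\end{equation*}

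The preliminary step is the identification $E^\prime(K)[2] = C^\prime$. The subgroup $C^\prime = \phi(E[2])$ is always $G_K$-stable and hence $K$-rational as a group, while $E^\prime[2] \not\subseteq E^\prime(K)$ because $K(E^\prime[2]) = K(E[2]) \ne K$ (the latter forced by $E(K)[2] \simeq \Zt$). Setting $A = E^\prime(K)[2]/\phi(E(K)[2])$ and $A^\prime = E(K)[2]/\hat\phi(E^\prime(K)[2])$, I then have $\dimF A = \dimF A^\prime = 1$, using $\phi(C) = 0 = \hat\phi(C^\prime)$. Inequality (ii) now follows by applying Theorem \ref{gss} with the roles of $(E,\phi)$ and $(E^\prime,\hat\phi)$ exchanged: the embedding $A^\prime \hookrightarrow \Sel_{\hat\phi}(E^\prime/K)$ immediately gives $\dimF \Sel_{\hat\phi}(E^\prime/K) \ge 1$.

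For (i), Theorem \ref{gss} yields $\dimF \Sel_2(E/K) = (\dimF \Sel_\phi(E/K) - 1) + \dimF M$, where $M$ is the image of $\phi: \Sel_2(E/K) \to \Sel_{\hat\phi}(E^\prime/K)$, so it suffices to show $\dimF M \ge 1$. My plan is to exhibit a nonzero class in $M$ via the identity $\hat\phi \circ \phi = [2]_E$: for any $P \in E(K)$ and any $R \in E(\overline{K})$ with $2R = P$, the Kummer class $\delta_{[2]}(P) \in \Sel_2(E/K)$ is represented by $\sigma \mapsto \sigma R - R \in E[2]$, and its image in $H^1(K,C^\prime)$ under the projection $E[2] \twoheadrightarrow E[2]/C \cong C^\prime$ is represented by $\sigma \mapsto \phi(\sigma R - R) = \sigma \phi(R) - \phi(R)$, which is precisely $\delta_{\hat\phi}(P)$ since $\hat\phi(\phi(R)) = 2R = P$. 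Taking $P$ to be the nonzero element of $E(K)[2] = C$, the resulting class $\delta_{\hat\phi}(P) \in M$ is nonzero because $E(K)[2] \cap \hat\phi(E^\prime(K)) = \hat\phi(E^\prime(K)[2]) = \hat\phi(C^\prime) = 0$. The main obstacle is the cocycle-level identification between the image of $\delta_{[2]}(P)$ under $\phi$ and $\delta_{\hat\phi}(P)$; once that diagram chase is in hand, the remaining argument is formal bookkeeping from the two applications of Theorem \ref{gss}.
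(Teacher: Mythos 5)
Your proof is correct and follows essentially the same route as the paper's: both combine $\ord_2 \T(E/E^\prime) = \dimF \Sel_\phi(E/K) - \dimF \Sel_\hatphi(E^\prime/K)$ with Theorem \ref{gss} and the key observation that the Kummer class of the rational $2$-torsion point $P$ contributes a dimension to $\Sel_2(E/K)$ beyond the image of $\Sel_\phi(E/K)$, which in both cases reduces to $E^\prime(K)[2] = C^\prime$. The paper phrases that last point as the cocycle $\sigma \mapsto \sigma R - R$ not taking values in $C$, while you show the equivalent (by exactness of the four-term sequence) statement that $\phi_*\delta_{[2]}(P) = \delta_{\hatphi}(P)$ is nonzero in $\Sel_\hatphi(E^\prime/K)$; this is the same argument in different packaging.
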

\begin{proof}
From the definition, we have \begin{equation}\label{ordsum}\ord_2 \mathcal{T}(E/E^\prime)  = \dimF \Sel_\phi(E/K) - \dimF \Sel_\hatphi(E^\prime/K).\end{equation} Since $E(K)[2] \simeq \Zt$ and $K(E[2]) = K(E^\prime[2])$, we get that $E^\prime(K)[2] \simeq \Zt$ as well. It then follows from Theorem \ref{gss} that $\dimF \Sel_\hatphi(E^\prime/K) \ge 1$ and that the map of $\Sel_\phi(E/K)$ into $\Sel_2(E/K)$ is $2$-to-$1$.  Combined with  (\ref{ordsum}), we get that the image of $\Sel_\phi(E/K)$ in $\Sel_2(E/K)$ has $\Ftwo$-dimension at least $\ord_2 \mathcal{T}(E/E^\prime)$.

Let $P$ generate $E(K)[2]$ and let $c \in \Sel_2(E/K)$ be the image of $P$ in $\Sel_2(E/K)$. We can represent $c$ by a cocycle $\hat c: G_K \rightarrow E[2]$ given by $\hat c(\gamma) = \gamma(R) - R$ for some $R \in E(\overline{K})[4]$ with $2R = P$. Observe that since $2R = P$, it must be that $\phi(R) \in E^\prime[2] - C^\prime$. If $\sigma(R) - R \in C$ for every $\sigma \in G_K$, then $\phi(R) \in E^\prime(K)$ since $\phi(C) = 0$ and $\phi(\sigma(R) - R) = \sigma(\phi(R)) - R$ for $\sigma \in G_K$. Since this would contradict $E^\prime(K)[2] \simeq \Zt$, it must be that $\sigma(R) - R \not \in C$ for some $\sigma \in G_K$ and $c$ therefore does not come from $H^1(K, C)$. We therefore get that $d_2(E/K) \ge \ord_2 \mathcal{T}(E/E^\prime).$
\end{proof}

Theorem \ref{badfamily} now follows easily from Proposition \ref{allsmall}.
\begin{proof}[Proof of Theorem \ref{badfamily}]
Let $E = \En \in \mathcal{F}$ and $F/K$ quadratic.

By Lemma \ref{prodform2}, $\ord_2 \T ( E^F/E^{\prime F} )$ is given by $$\ord_2 \T ( E^F/E^{\prime F} ) = \sum_{v \text{ of } K}  \left (  \dimF H^1_\phi(K_v, C^F) - 1 \right ).$$ By Proposition \ref{allsmall}, we get that $\dimF H^1_\phi(K_v, C^F) - 1 \ge 0$ for all places $v \nmid 2\infty$. This yields 
\begin{equation*}
\begin{split}
\ord_2 \T ( E^F/E^{\prime F} ) \ge -(r_1 + r_2) + \sum_{v \mid 2}  \left (\dimF H^1_\phi(K_v, C^F) - 1 \right ) \\
\ge  -(r_1 + r_2) +  \sum_{v \mid 2}\left(  \dimF H^1(K_v, C) - 2 \right ),
\end{split}
\end{equation*}
with the second inequality following from Proposition \ref{allsmall} as well.

As $H^1(K_v, C) \simeq K_v^\times/(K_v^\times)^2$, we get that $\dimF H^1(K_v, C) = 2 + [K_v:\Q_2]$ for places $v \mid 2$. We therefore have $$\ord_2 \T ( E^F/E^{\prime F} ) \ge -(r_1 + r_2) +\sum_{v\mid 2} [ K_v: \Q_2]  = -(r_1 + r_2) + [K:\Q] = r_2.$$
 
Proposition \ref{ord2T} then shows that $d_2(E^F/K) \ge r_2$.

The family $\F$ is infinite since every number field $K$ has infinitely many $n$ with $1+256n^2 \not \in (K^\times)^2$. The curves $E_n$ have distinct j-invariants and therefore are not isomorphic over $\overline{K}$. Since all of the $E_n$ have multiplicative reduction at all places above $2$,  work of Mazur and Rubin in \cite{MR} shows that none of them have constant 2-Selmer parity.
\end{proof}

\section*{Acknowledgements} This paper is based on work conducted by the author as part of his doctoral thesis at UC-Irvine under the direction of Karl Rubin and was supported in part by NSF grants DMS-0457481 and DMS-0757807. I would like to express my utmost gratitude to Karl Rubin for the guidance and assistance he provided while undertaking this research. I would also like to thank the reviewer for making many helpful suggestions, in particular suggesting the model (\ref{modelEn}) for $\En$.

\bibliographystyle{mrl}
\bibliography{citations}  

\end{document}